\newcounter{unit}
\renewcommand{\theunit}{\arabic{unit}}
\newcommand{\unitlabelstyle}{\bfseries}
\newcommand{\unitreferencestyle}{}
\newcommand{\unitname}{???}
\newcommand{\unitlabel}[1]{#1}
\newcommand{\unitclose}{}
\newcommand{\increaseunitcounter}[1][y]{%
	\ifthenelse{\equal{#1}{n}}{}{\refstepcounter{unit}}%
	\renewcommand{\unitlabel}[1]{%
		\mbox{%
			{\unitlabelstyle\unitname\ifthenelse{\equal{#1}{n}}{}{\ \theunit}}%
			\ifthenelse{\equal{##1}{}}{}{\unitreferencestyle\ (##1)}%
		}%
	}%
}%
\newcommand{\newunit}[5][y]{%
	\newenvironment{#2}[1][]{%
		\increaseunitcounter[#1]%
		\renewcommand{\unitname}{#3}%
		\renewcommand{\unitclose}{#4}%
		\begin{list}{}{%
			\renewcommand{\makelabel}{\unitlabel}%
		}\item[##1]#5%
	}{%
		\hspace*{\fill}\unitclose%
		\end{list}%
		\renewcommand{\unitname}{???}%
		\renewcommand{\unitlabel}[1]{#1}%
		\renewcommand{\unitclose}{}%
	}%
}
\newcommand{\notationname}{Notation}      \newcommand{\notationend}{\ensuremath{\bowtie}}       \newcommand{\notationstyle}{}
\newcommand{\definitionname}{Definition}  \newcommand{\definitionend}{\ensuremath{\circ}}       \newcommand{\definitionstyle}{}
\newcommand{\remarkname}{Remark}          \newcommand{\remarkend}{\ensuremath{\triangleleft}}   \newcommand{\remarkstyle}{}
\newcommand{\examplename}{Example}        \newcommand{\exampleend}{\ensuremath{\triangleright}} \newcommand{\examplestyle}{}
\newcommand{\conjecturename}{Conjecture}  \newcommand{\conjectureend}{\ensuremath{\diamond}}    \newcommand{\conjecturestyle}{\itshape}
\newcommand{\lemmaname}{Lemma}            \newcommand{\lemmaend}{\ensuremath{\diamond}}         \newcommand{\lemmastyle}{\itshape}
\newcommand{\theoremname}{Theorem}        \newcommand{\theoremend}{\ensuremath{\diamond}}       \newcommand{\theoremstyle}{\itshape}
\newcommand{\propositionname}{Proposition}\newcommand{\propositionend}{\ensuremath{\diamond}}   \newcommand{\propositionstyle}{\itshape}
\newcommand{\corollaryname}{Corollary}    \newcommand{\corollaryend}{\ensuremath{\diamond}}     \newcommand{\corollarystyle}{\itshape}
\newcommand{\proofname}{Proof}            \newcommand{\proofend}{\ensuremath{\square}}          \newcommand{\proofstyle}{}
\newcommand{\N}{\ensuremath{\mathbb{N}}}
\newcommand{\blog}[1][b]{\ensuremath{\log_{#1}}}
\renewcommand{\P}{\ensuremath{\theta}}
\newcommand{\floor}[1]{\ensuremath{\left\lfloor#1\right\rfloor}}
\lstdefinelanguage{GAP}
  {morekeywords={and,do,elif,else,end,fi,for,function,if,in,local,mod,not,od,%
		repeat,return,then,until,while,quit,QUIT,break,rec,continue},
  sensitive=true,
  morecomment=[l]{\#},
  morestring=[b]",
}
\title{On a curious property of 3435.}
\author{Daan van Berkel}
\begin{document}
	\maketitle
	\begin{abstract}
	Folklore tells us that there are no uninteresting natural numbers. But some 
	natural numbers are more interesting then others. In this article we will 
	explain why $3435$ is one of the more interesting natural numbers around.
	
	We will show that $3435$ is a \emph{Munchausen number} in base 10, and we 
	will explain what we mean by that. We will further show that for every base
	there are finitely many Munchausen numbers in that base.
\end{abstract}

	Folklore tells us that there are no uninteresting natural numbers. The argument
hinges on the following observation: \emph{Every subset of the natural numbers
is either empty, or has a smallest element}.

The argument usually goes something like this. If there would be any 
uninteresting natural numbers, the set $\mathcal{U}$ of all these uninteresting
natural numbers would have a smallest element, say $u \in \mathcal{U}$. 
But $u$ in it self has a very remarkable property. $u$ is the smallest 
uninteresting natural number, which is very interesting indeed. So 
$\mathcal{U}$, the set of all the uninteresting natural numbers, can not have a
smallest element, therefore $\mathcal{U}$ must be empty. In other words, all 
natural numbers are interesting.

Having established this result, exhibiting an interesting property of a specific
natural number is often left as an excercise for the reader. Take for example 
the integer $3435$. At first it does not seem that remarkable, until one 
stumbles upon the following identity.
\[
	3435 = 3^{3} + 4^{4} + 3^{3} + 5^{5}
\]
This coincidence is even more remarkable when one discovers that there is only 
one other natural number which shares this property with $3435$, namely
$1 = 1^{1}$.

In this article we will establish the claim made and generalize the result.

	\section*{Munchausen Number}
Through out the article we will use the following notation. $b \in \N$ will 
denotate a base and therefore the inequility $b \ge 2$ will hold throughout the
article. For every natural number $n \in \N$, the \emph{base $b$ representation
of $n$} will be denoted by $[c_{m-1}, c_{m-2}, \ldots, c_{0}]_{b}$, 
so $0 \le c_{i} < b$ for all $i \in \{0,1,\ldots,m-1\}$ and 
$n = \sum_{i=0}^{m-1} c_{i}b^{i}$.
Furtheremore, we define a function $\P_{b} : \N \rightarrow \N : n \mapsto 
\sum_{i=0}^{m-1} c_{i}^{c_{i}}$, where $n = [c_{m-1},c_{m-2},\ldots,c_{0}]_{b}$.
We will further adopt the convention that $0^{0} = 1$, in accordance with 
$1^{0} = 1$, $2^{0} = 1$ etcetera.

\begin{definition}
	An integer $n \in \N$ is called a \emph{Munchausen number in base $b$} if 
	and only if $n = \P_{b}(n)$.
\end{definition}

So by the equality in the introduction we know that $3435$ is a Munchausen
number in base $10$. 

\begin{remark}
	A related concept to Munchausen number is that of Narcissistic number. 
	(See for example \cite{pickover}, \cite{wikipedia:narcissistic_number} and
	\cite{wolfram:narcissistic_number}.)
	
	The reason for picking the name Munchausen number stems from the visual of
	raising oneself, a feat demonstrated by the famous Baron von Munchausen 
	(\cite{wikipedia:munchausen}). Andrew Baxter remarked that the Baron is a 
	narcissistic man indeed, so I think the name is aptly chosen.
\end{remark}

The following two lemmas will be used to proof the main 
result of this article: for every base $b \in \N$ there are only finitely many 
Munchausen numbers in base $b$.

\begin{lemma}
	For all $n \in \N$: $\P_{b}(n) \le (\blog(n) + 1)(b-1)^{b-1}$.
\end{lemma}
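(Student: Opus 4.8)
The plan is to bound $\P_b(n)$ by controlling two things: how many digits $n$ has in base $b$, and how large each summand $c_i^{c_i}$ can be. Write $n = [c_{m-1}, c_{m-2}, \ldots, c_0]_b$, so that $\P_b(n) = \sum_{i=0}^{m-1} c_i^{c_i}$ is a sum of exactly $m$ terms.

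First I would observe that each term satisfies $c_i^{c_i} \le (b-1)^{b-1}$, since $0 \le c_i \le b-1$ and the function $x \mapsto x^x$ is nondecreasing on the nonnegative integers (using the convention $0^0 = 1$, which is the smallest value anyway). Therefore
\[
	\P_b(n) = \sum_{i=0}^{m-1} c_i^{c_i} \le m \cdot (b-1)^{b-1}.
\]
So it remains only to show that the number of base-$b$ digits of $n$ is $m \le \blog(n) + 1$. This is the standard fact that a natural number $n \ge 1$ with $m$ digits in base $b$ satisfies $b^{m-1} \le n$, hence $m - 1 \le \blog(n)$, i.e. $m \le \blog(n) + 1$; combining with the previous display gives the claim. (The edge case $n = 0$, where by convention $m = 1$ and $\P_b(0) = 0^0 = 1$, should be checked separately, or the statement read with the understanding that $\blog(0)$ is handled by convention — I would add a sentence noting $n=0$ is trivial or excluded.)

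I do not anticipate a serious obstacle here; the only things to be careful about are the degenerate conventions ($0^0 = 1$, the digit count of $0$, and that $\blog(n)$ is only really meaningful for $n \ge 1$) and making sure the monotonicity of $x \mapsto x^x$ on $\{0, 1, \ldots, b-1\}$ is stated cleanly — in particular that the maximum over this range is attained at $b-1$, which is immediate for $b \ge 2$. The whole argument is two short inequalities chained together, so I would present it in three or four lines.
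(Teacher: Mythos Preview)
Your proposal is correct and is essentially identical to the paper's proof: bound each summand $c_i^{c_i}$ by $(b-1)^{b-1}$ via monotonicity of $x\mapsto x^x$ on the nonnegative integers, then bound the number $m$ of digits by $\blog(n)+1$. The only cosmetic differences are that the paper explicitly computes the derivative $x^x(\log x+1)$ to justify monotonicity for $x>1/e$, and it does not single out the $n=0$ edge case that you flag.
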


\begin{proof}
	Notice that the function $x \mapsto x^{x}$ is strictly increasing if 
	$x \ge \frac{1}{e}$. This can be seen from the derivative of $x^{x}$ which 
	is $x^x(\log(x) + 1)$. This last expression is clearly positive for 
	$x > \frac{1}{e}$.
	Together with the definition of $0^{0} = 1$, we see that $x^{x}$ is 
	increasing for all the nonnegative integers.
	
	For all $n \in \N$ with $n = [c_{m-1}, c_{m-2}, \ldots, c_{0}]_{b}$ we have 
	the ineqalities $0 \le c_{i} \le b-1$ for all $i$ within $0 \le i < m$.	\\
	So $\P_{b}(n) = \sum_{i=0}^{m-1} c_{i}^{c_{i}} \le 
	\sum_{i=0}^{m-1} (b-1)^{b-1} = m \times (b-1)^{b-1}$.
	
	Now, the number of digits in the base $b$ represantation of $n$ equals 
	$\floor{\blog(n) + 1}$. In other words $m := \floor{\blog(n) + 1} 
	\le \blog(n) + 1$.
	
	So $\P_{b}(n) \le (\blog(n) + 1)(b-1)^{b-1}$
\end{proof}

\begin{lemma}
	If $n \in \N$ and $n > 2b^{b}$ then $\frac{n}{\blog(n) + 1}	> (b-1)^{b-1}$.
\end{lemma}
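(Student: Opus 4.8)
The plan is to bound $n$ from below by the continuous function $f(x) = \frac{x}{\blog(x)+1}$ and reduce everything to a single evaluation at $x = 2b^{b}$. Concretely, I would show that $f$ is strictly increasing on $[2b^{b},\infty)$ and that $f(2b^{b}) \ge (b-1)^{b-1}$; then any $n \in \N$ with $n > 2b^{b}$ satisfies $\frac{n}{\blog(n)+1} = f(n) > f(2b^{b}) \ge (b-1)^{b-1}$, which is exactly the assertion.

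\emph{Monotonicity.} Set $g(x) = \blog(x)+1$, so $g'(x) = \frac{1}{x\log(b)}$ and $f'(x) = \frac{g(x)-x g'(x)}{g(x)^{2}} = \frac{\blog(x)+1-1/\log(b)}{g(x)^{2}}$. This is positive exactly when $\blog(x) > \frac{1}{\log(b)} - 1$. For $x \ge 2b^{b}$ one has $\blog(x) \ge \blog(b^{b}) = b \ge 2$, while $\frac{1}{\log(b)} - 1 < 1$ because $\log(b) \ge \log(2) > \frac12$. Hence $\blog(x) \ge 2 > 1 > \frac{1}{\log(b)} - 1$, so $f$ is strictly increasing on $[2b^{b},\infty)$.

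\emph{The boundary value.} Since $\blog(2b^{b}) = \blog(2) + b$, we have $f(2b^{b}) = \frac{2b^{b}}{b+1+\blog(2)}$. For $b \ge 2$ the inequality $\blog(2) \le 1 \le b-1$ holds, so the denominator is at most $2b$, giving $f(2b^{b}) \ge \frac{2b^{b}}{2b} = b^{b-1}$. Finally $b^{b-1} > (b-1)^{b-1}$, since $b > b-1 \ge 1$ and the exponent $b-1$ is positive. Combined with the monotonicity step, this yields $\frac{n}{\blog(n)+1} > (b-1)^{b-1}$ for every $n > 2b^{b}$.

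The whole argument is elementary bookkeeping, so there is no serious obstacle; the one point that deserves a second look is checking that the threshold of the increasing regime of $f$ lies at or below $2b^{b}$ uniformly in $b$, which is precisely what the estimate $\blog(2b^{b}) \ge b \ge 2 > \frac{1}{\log(b)} - 1$ delivers. A calculus-free variant is also available: write $n = 2b^{b}t$ with $t > 1$ and verify $2bt \ge \blog(n) + 1$ directly using $\log(t) \le t-1$ together with $\blog(2) \le b-1$; but the monotonicity route seems cleaner.
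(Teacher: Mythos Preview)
Your argument is correct and follows essentially the same route as the paper: establish monotonicity of $x/(\blog(x)+1)$ via its derivative, evaluate at $x=2b^{b}$, bound the denominator $b+1+\blog(2)$ by $2b$ using $\blog(2)\le 1\le b-1$, and finish with $b^{b-1}>(b-1)^{b-1}$. If anything your version is slightly tidier, since you differentiate $x/(\blog(x)+1)$ directly rather than $x/\blog(x)$ as the paper does.
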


\begin{proof}
	Let $n \in \N$ such that $n > 2b^{b}$. Notice that 
	$x \mapsto \frac{x}{\blog(x)}$ is strictly increasing if $x > e$. To see
	this notice that the derivative of $\frac{x}{\blog{x}}$ is 
	$\log(b)\frac{\log(x) - 1}{\log^2(x)}$ which is positive for $x > e$.
	Furthermore $\blog(2) + 1 \le 2 \le b = b\blog(b)$. 
	
	Now, because $n > 2b^{b} > e$, from the following chain of ineqalities:
	\[
		\frac{n}{\blog(n) + 1} > \frac{2b^{b}}{b\blog(b) + \blog(2) + 1} \ge 
		\frac{2b^{b}}{2b\blog(b)} = b^{b-1} > (b-1)^{b-1}
	\]
	we can deduce that $\frac{n}{\blog(n) + 1} > (b-1)^{b-1}$
\end{proof}

With both lemma's in place we can present without further ado the main result of
this article.

\begin{proposition}
	For every base $b \in \N$ with $b \ge 2$: there are only finitely many 
	Munchausen numbers in base $b$.
\end{proposition}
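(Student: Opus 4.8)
The plan is to combine the two lemmas directly. Lemma 1 gives an upper bound on $\P_{b}(n)$ that grows only logarithmically in $n$, while Lemma 2 shows that once $n$ exceeds the threshold $2b^{b}$ this same quantity is already strictly smaller than $n$ itself. Since a Munchausen number must satisfy $n = \P_{b}(n)$, no integer above the threshold can be one, so every Munchausen number in base $b$ lies in the finite set $\{0, 1, \ldots, 2b^{b}\}$, which immediately gives finiteness.

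In more detail, first I would fix $b \ge 2$ and let $n \in \N$ be a Munchausen number in base $b$, so $n = \P_{b}(n)$. Arguing by contradiction, suppose $n > 2b^{b}$. By Lemma 1 we have $\P_{b}(n) \le (\blog(n) + 1)(b-1)^{b-1}$. By Lemma 2, since $n > 2b^{b}$, we have $\frac{n}{\blog(n)+1} > (b-1)^{b-1}$; as $n \ge 2$ forces $\blog(n)+1 > 0$, we may multiply through by $\blog(n)+1$ to obtain $n > (\blog(n)+1)(b-1)^{b-1}$. Chaining the two inequalities yields $n = \P_{b}(n) \le (\blog(n)+1)(b-1)^{b-1} < n$, a contradiction. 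Hence $n \le 2b^{b}$.

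It follows that every Munchausen number in base $b$ is bounded above by $2b^{b}$, and since there are only finitely many natural numbers not exceeding $2b^{b}$, there are only finitely many Munchausen numbers in base $b$. I do not expect any real obstacle here: all of the analytic content — the monotonicity of $x \mapsto x^{x}$ and of $x \mapsto x/\blog(x)$, and the estimate $b^{b-1} > (b-1)^{b-1}$ — has already been discharged inside the two lemmas, so the proposition itself is essentially a two-line deduction. The only point requiring a moment's care is the legitimacy of clearing the denominator $\blog(n)+1$ in the inequality from Lemma 2, which is fine because that quantity is positive for every $n \ge 2$.
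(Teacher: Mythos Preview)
Your proof is correct and follows essentially the same route as the paper: combine Lemma~1 and Lemma~2 to obtain $n > (\blog(n)+1)(b-1)^{b-1} \ge \P_{b}(n)$ whenever $n > 2b^{b}$, so any Munchausen number must lie below $2b^{b}$. The only difference is cosmetic---you frame it as a contradiction and make the positivity of $\blog(n)+1$ explicit---but the argument is identical.
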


\begin{proof}
	By the preceding lemma's we have, for all $n \in \N$ with $n > 2b^{b}$: 
	$n > (\blog(n) + 1)(b-1)^{b-1} \ge \P_{b}(n)$.
	
	So, in order for $n$ to equal $\P_{b}(n)$, $n$ must be less then or equal to 
	$2b^{b}$. This proves that there are only finitely many Munchausen numbers
	in base $b$.
\end{proof}

	\section*{Exhaustive Search}

The proposition in the preceding section tells use that for every base 
$b \in \N$, Munchausen numbers in that base only occur within the interval 
$[1,2b^{b}]$. This makes it possible to exhaustively search for Munchausen 
numbers in each base.

Figure \ref{figure:munchausen} lists all the Munchausen numbers in the bases 2
through 10. So for example in base $4$, $29$ and $55$ are the only non-trivial
Munchausen numbers. Furthermore, the base $4$ representation of $29$ and $55$
have a striking resemblance. For $29 = [1,3,1]_{4} = 1^{1} + 3^{3} + 1^{1}$ and
$55 = [3,1,3]_{4} = 3^{3} + 1^{1} + 3^{3}$.

\begin{figure}[th]
	\begin{center}
		\caption{Munchausen numbers in base 2 through 10.}
		\label{figure:munchausen}
		\begin{tabular}{|c|l|l|}
			\hline
			Base & Munchausen Numbers & Representation \\
			\hline
			2  & 1, 2                 & {\small$[1]_{2}$, $[1,0]_{2}$} \\
			3  & 1, 5, 8              & {\small$[1]_{3}$, $[1,2]_{3}$, $[2,2]_{3}$} \\
			4  & 1, 29, 55            & {\small$[1]_{4}$, $[1,3,1]_{4}$, $[3,1,3]_{4}$} \\
			5  & 1                    & {\small$[1]_{5}$} \\
			6  & 1, 3164, 3416        & {\small$[1]_{6}$, $[2,2,3,5,2]_{6}$, $[2,3,4,5,2]_{6}$} \\
			7  & 1, 3665              & {\small$[1]_{7}$, $[1,3,4,5,4]_{7}$} \\
			8  & 1                    & {\small$[1]_{8}$} \\
			9  & 1, 28, 96446, 923362 & {\small$[1]_{9}$, $[3,1]_{9}$, $[1,5,6,2,6,2]_{9}$, $[1,6,5,6,5,4,7]_{9}$} \\
			10 & 1, 3435              & {\small$[1]_{10}$, $[3,4,3,5]_{10}$} \\
			\hline
		\end{tabular}
	\end{center}
\end{figure}

The sequence of Munchausen numbers is listed as sequence A166623 at the OEIS. 
(See \cite{oeis:munchausen}. For the related sequence of Narcissistic numbers see
\cite{oeis:narcissistic})

The code in listing \ref{code:munchausen} is used to produce the numbers in
figure \ref{figure:munchausen}. There are two utility functions. These are 
\lstinline!munchausen! and \lstinline!next!. \lstinline!munchausen! calculates 
$\P_{b}(n)$ given a base $b$ representation of $n$. \lstinline!next! returns the
base $b$ representation of $n+1$ given a base $b$ representation of $n$.

I would like to conclude this article with a question my wife asked me while I
was writing this: ``But what about $20082009$?''

\lstinputlisting[
	caption={GAP code finding Munchausen numbers},
	label=code:munchausen,
	basicstyle=\scriptsize
]{code/OEIS.g}

	\bibliographystyle{unsrt}
\bibliography{3435}

\end{document}